\title[Kolmogorov equations associated to filtering equation: viscosity solutions]{Kolmogorov equations on the space of probability measures associated to the nonlinear filtering equation: the viscosity approach}
\author{Mattia Martini}
\address{Mattia Martini: Dipartimento di Matematica "Federigo Enriques", Università degli Studi di Milano, Via Saldini 50,  20133 Milano (Italy)}
\email{mattia.martini@unimi.it}
\begin{document}
\begin{abstract}
We study the backward Kolmogorov equation on the space of probability measures associated to the Kushner-Stratonovich equation of nonlinear filtering. We prove existence and uniqueness in the viscosity sense and, in particular, we provide a comparison theorem. 

In the context of stochastic filtering it is natural to consider measure-valued processes that satisfy stochastic differential equations. In the literature, a classical way to address this problem is by assuming that these measure-valued processes admit a density. Our approach is different and we work directly with measures. Thus, the backward Kolmogorov equation we study is a second-order partial differential equation of parabolic type on the space of probability measures with compact support. In the literature only few results are available on viscosity solutions for this kind of problems and in particular the uniqueness is a very challenging issue. Here we find a viscosity solution and then we prove that it is unique via comparison theorem.
\end{abstract}

\maketitle
\section{Introduction}
The main goal of this paper is to discuss existence and uniqueness for viscosity solutions to a class of backward Kolmogorov equations on spaces of probability measures, arising in the context of stochastic filtering.\\

The study of partial differential equations on spaces of probability measures is a topic of increasing research interest. For instance, in \cite{ambrosiogangbo,gangbotudorascu,bayraktarcossopham,phamwei18,cardaliaguetcirantporretta} and references therein one can find many results both for the first and second-order case. A main issue is how to define the derivatives on this spaces and many approaches are available in the literature (see \cite{ambrosiogiglisavare} for a more geometric approach related to optimal transport or \cite{lions, carmonadelarue1} for a more functional analytic and probabilistic approach connected with the theory of mean field games).

Here we continue the study started in \cite{martini}. In particular, we consider a stochastic differential equation for a probability measure-valued process arising in the context of nonlinear filtering, and then we study the associated backward Kolmogorov equation, that is a second-order linear partial differential equation of parabolic type on the space of probability measures. We prove the existence and uniqueness of a viscosity solution to this equation.\\ 

The stochastic filtering problem is about finding a way to approximate a signal (described by a stochastic process) that can not be directly observed. In the last decades a lot of research interest has focused in this direction, see for instance \cite{baincrisan, xiong} and references therein for an exhaustive introduction to the topic. A key object for the theory is the so-called filter, a probability measure-valued stochastic process defined, at any time, as the conditional law of the signal process given the available information. One can prove that this process is Markovian and satisfies a stochastic differential equation (in a way we will point out later), called Kushner-Stratonovich equation. Then, it is natural to study the associated backward Kolmogorov equation.

A classical way to approach this problem (see for instance \cite{rozovsky}), is to show that the filter admits a density which belongs to a suitable Hilbert space, and then exploit the tools of stochastic calculus available in this case. However, this requires the introduction of unnecessary hypotheses. In this paper we avoid these conditions and rather follow the approach of \cite{baincrisan,heunislucic, szpirglas}, where the filtering process is studied as genuine measure-valued process.

Let us fix a final time $T>0$, a $d$-dimensional Brownian motion $I = \{I_t,t\in[0,T]\}$ and let us denote with $\pi=\{\pi_t,t\in[0,T]\}$ the filter, which takes values in $\prob(\R^d)$, the space of probability measures over $\R^d$. The Kushner-Stratonovich equation reads as follow: for every test function $\varphi$, it holds
\begin{equation}\label{eqn: ksintro}
	\de \scalprod{\pi_t}{\varphi} = \scalprod{\pi_t}{A\varphi}\de t + (\scalprod{\pi_t}{h\varphi + B\varphi}^\top-\scalprod{\pi_t}{\varphi}\scalprod{\pi_t}{h}^\top)\de I_t.
\end{equation}
Here $\scalprod{\mu}{\varphi}:=\int\varphi(x)\mu(\de x)$, $A$ and $B$ are respectively a second and first-order differential operators whilst $h\colon\R^d\to\R^d$. The complete and rigorous framework is provided in Section \ref{subsec: filt}. Under hypotheses that guarantee the existence and uniqueness of a Markovian solution to \eqref{eqn: ksintro}, it has been proved in \cite{martini} that the Kolmogorov operator associated to the Kushner-Stratonovich equation has the following form:
\begin{equation*}
	(\mathcal{L}u)(\mu) = \scalprod{\mu}{A\lf u(\mu)} + \frac{1}{2}\scalprod{\mu\otimes\mu}{(h+B-\scalprod{\mu}{h})^\top(h+B-\scalprod{\mu}{h})\lf^2u(\mu)},
\end{equation*}
where $\lf u$ and $\lf^2 u$ are suitably defined derivatives of real-valued functions over $\prob(\R^d)$ (see Section \ref{ssec: deriv} for the precise definitions). Thus, given a final condition $\Phi\colon\prob(\R^d)\to\R$, the backward Kolmogorov equation reads as
\begin{equation}\label{eqn: kolmintr}
	\begin{cases}
		-\partial_t u(\mu,t) - \mathcal{L} u (\mu,t) = 0,\quad &(\mu,t)\in\prob(\R^d)\times[0,T),\\
		u(\mu,T) = \Phi(\mu),\quad &\mu\in\prob(\R^d).
	\end{cases}
\end{equation}
In \cite{martini}, the author proved that if $\Phi$ is twice continuously differentiable, then there exists a unique classical solution to \eqref{eqn: kolmintr}.\\

The aim of this paper is to study the case of a less regular final condition, namely just continuous and bounded. In this case, the notion of classical solution turns out to be too strong, thus a weaker notion is necessary. In particular, the notion of viscosity solution seems to be the natural one for this kind of problems. 

The theory of viscosity solutions is well-established in the finite dimensional case (see for instance \cite{crandallishiilions}) and in the infinite dimensional Hilbert case (see for instance \cite{swiech94}). Recently some developments have been obtained for more complicate spaces, where it is not easy to provide a notion of derivative. Two examples are the space of continuous paths (see for instance \cite{cossorusso} and references therein) and the space of probability measures, to which the present work refers. In particular, in \cite{bayraktarcossopham,phamwei17,phamwei18,cossogozzikharroubiphamrosestolato1,bandinicossofuhrmanpham2} the existence of viscosity solutions to second-order equations on the space of probability measures is addressed. On the other hand, uniqueness results are very hard to achieve, and only few papers are available for second-order equations (\cite{burzoniignazioreppensoner, wuzhang, cossogozzikharroubiphamrosestolato2}).

Our work gives a contribution in this direction. In Proposition \ref{prop: existence} we show that there exists a viscosity solution to \eqref{eqn: kolmintr}. Then, we are also able to prove a uniqueness result via comparison principle (Theorem \ref{thm: comparison} and Corollary \ref{cor: uniq}). The technique is based on the fact that we have a candidate solution (given by a probabilistic representation formula of the usual type) that can be used to obtain partial comparison principles. An assumption we need in order to achieve this last result, is to restrict ourselves to the compact subspace $\prob(K)\subset\prob(\R^d)$, the space of probabilities supported on a compact set $K\subset \R^d$. This is for instance the case when the filtering equation is associated to a signal with trajectories confined on a compact subset or on a compact manifold like a torus. However, a remarkable feature of this approach is to avoid the lifting of the problem on a Hilbert space, which is a procedure often used to solve this type of second-order problems. In that case, for viscosity solutions it is not yet clear what is the relation between the lifted problem and the original one.\\ 

The rest of the paper is organized as follows. After a preliminary part on the notations, in Section \ref{sec: prelim} we clarify the notion of derivative on the space of probability measure we will use and then we introduce briefly the stochastic filtering framework of our interest. In Section \ref{sec: viscosity} we introduce the backward Kolmogorov equation we want to study, in Section \ref{ssec: exist} we prove that there exists a viscosity solution and in Section \ref{ssec: uniq} we show the comparison principle and the uniqueness. Finally, in Appendix \ref{app: analytical} we collect some analytical results needed in the paper.
\section{Notations and preliminaries}\label{sec: prelim}
Let $E\subseteq\R^d$. We denote with $\mathrm{C}(E)$ the space of continuous real-valued functions over $E$, with $\mathrm{C_b}(E)$ the continuous and bounded real-valued functions over $E$ and with $\mathrm{C_c}(E)$ the continuous functions over $E$ with compact support. Similarly, for $k\in\N$, we denote with $\mathrm{C}^k(E)$ (resp. $\mathrm{C}^k_{\mathrm{b}}(E)$ and $\mathrm{C}^k_{\mathrm{c}}(E)$) the space of $k$-time continuously differentiable (resp. continuously differentiable with bounded derivatives up to order $k$-th and continuously differentiable with compact support) functions over $E$. If $E$ is compact, we say that a function is continuously differentiable over $E$ if it has bounded and uniformly continuous derivative over $\Int{E}$.

Let $u$ be a real-valued (or $\R^d$-valued) function over a suitable space $X$. We denote with $\norm{u}_\infty:=\sup_{x\in X} \lvert u(x)\vert$ its supremum norm.\\  

We denote with $\prob(E)$ the space of probability measures over $E$ and with $\scalprod{\mu}{\varphi}$ or $\mu(\varphi)$ the integral of $\varphi\in\mathrm{C_b}(E)$ with respect to $\mu\in\prob(E)$. We say that a sequence $\{\mu_n\}_{n\in\N}\subset\prob(E)$ converges weakly to $\mu$ if $\scalprod{\mu_n}{\varphi}\to\scalprod{\mu}{\varphi}$ as $n\to+\infty$, for every $\varphi\in\mathrm{C_b}(E)$. If we take $E$ compact, we have that the sets $\prob(E)$ coincides with the set of probability measures with finite moment of any order. Moreover, one can prove that if $E$ is compact, then $\prob(E)$ is also compact in the weak topology (see for instance \cite{villani}).

Let us consider a family $\{f_k\}_{k\in\N}$ of functions dense in $\mathrm{C^\infty_c}(E)$, with $\sup_{x\in E}\lvert f_k(x)\rvert=:\norm{f_k}_\infty\leq 1$, and let us assume that the function identically equal to $1$ belongs to this family. Let us introduce a sequence $\{q_k\}_{k\in\N}\subset[1,\+\infty)$ that we will make it explicit in Appendix \ref{app: analytical}. We define $\sfd_2\colon\prob(E)\times\prob(E)\to[0,+\infty)$ as
	\begin{equation}\label{eqn: distwang}
		\sfd_2(\mu,\nu) :=\left( \sum_{k=1}^{+\infty}\frac{1}{2^kq_k}\scalprod{\mu-\nu}{f_k}^2\right)^{\frac{1}{2}}.
	\end{equation}
It is easy to show that $\sfd_2$ is a distance over $\prob(E)$ and, if we take $E$ compact, one can also prove that it is complete (see \cite[Theorem 2.4.1]{borkar}). Moreover, $\sfd_2$ metricize the weak convergence (\cite[Theorem 2.1.1]{borkar}).\\

\noindent For the rest of the paper, we will denote with $K$ a compact subset of $\R^d$.

\subsection{Notions of derivative on $\prob(K)$}\label{ssec: deriv}
Let $K\subset\R^d$ be compact. We need to introduce a notion of derivative for maps from $\prob(K)$ to $\R$. Among the many possible notions available in literature, we follow \cite{cardaliaguetcirantporretta}. We start from the notion of linear functional derivative (see \cite{carmonadelarue1}) and then we introduce the definition of $L$-derivative (which turns out to coincide with the one introduced by P.-L. Lions in \cite{lions}, see for instance \cite{carmonadelarue1}).
\begin{definition}[Linear functional derivative]\label{def: lfder}
	We say that a function $u\colon\prob(K)\to\R$ has linear functional derivative if there exists a map 
	\begin{equation*}
		\lf u\colon\prob(K)\times K\to\R,	
	\end{equation*}
	which is jointly continuous and such that for every $\mu,\nu\in\prob(K)$ it holds
	\begin{equation}
		u(\mu)-u(\nu) = \int_0^1\int_{K}\lf u(\theta \mu + (1-\theta)\nu,x)[\mu - \nu](\de x)\de \theta.
	\end{equation}
	We denote by $\mathrm{C^1}(\prob(K))$ the space of functions that are differentiable in linear functional sense.
\end{definition}
\begin{remark}
	On $\prob(K)$ the linear functional derivative is defined up to an additive constant. However, we can choose the versione such that, for every $\mu\in\prob(K)$,
\begin{equation*}
	\int_K\lf u (\mu,x)\mu(\de x) = 0.
\end{equation*}
\end{remark}
\begin{remark}
	Iterating Definition \ref{def: lfder} (keeping the extra spatial variable in $\lf u$ fixed), one can give a notion of second-order derivative. In particular $\lf^2 u$ will be a continuous mapping from $\prob(K)\times K\times K$ to $\R$. Thus we can introduce the spaces $\mathrm{C}^2(\prob(K))$. Analogously, this definition can be extended up to the $k$-th order, $k\in\N$.
\end{remark}
\begin{definition}[$L$-derivative]
	We say that a function $u$ is in $\mathrm{C^2_L}(\prob(K))$ if:
	\begin{enumerate}[i.]
		\item $u\in\mathrm{C^2}(\prob(K))$;
		\item for every $\mu\in\prob(K)$, the maps $ K\ni x\mapsto\lf u(\mu,x)$ and $ K\times K\ni(x,y)\mapsto\lf^2u(\mu,x,y)$ are twice continuously differentiable, with jointly continuous derivatives over $\prob(K)\times K$ and $\prob(K)\times K\times K$.
	\end{enumerate}
	We define the first and second-order $L$-derivatives as
	\begin{equation*}
		\diff_\mu u(\mu,x):=\diff_x \lf u(\mu,x)\in \R^d,\quad\diff_\mu^2 u(\mu,x,y):=\diff_x\diff_y^\top \lf^2 u(\mu,x,y)\in\R^{d\times d},
	\end{equation*}
	for every $\mu\in\prob(K)$ and $x,y\in K$.
\end{definition}
\begin{example}\label{ex: cyl}
	Let $n\in\N$, $g\colon\R^n\to\R$ be in $\mathrm{C^2_b}(\R^d)$ and let $\varphi_i\colon K\to\R$ be in $\mathrm{C^1}(K)$ for every $i=1,\dots,n$. We define $f\colon\prob(K)\to\R$ by
	\begin{equation*}
		f(\mu):=g(\scalprod{\mu}{\varphi_1},\dots,\scalprod{\mu}{\varphi_n}) = g(\scalprod{\mu}{\boldsymbol\varphi}).
	\end{equation*}
	Then, $f\in\mathrm{C^2_L}(\prob(K))$ and for every $\mu\in\prob(K)$ and $x,y\in K$ it holds
	\begin{align*}
		\lf f (\mu,x) &= \sum_{i=1}^n\partial_ig(\scalprod{\mu}{\boldsymbol\varphi})\varphi_i(x),&\lf^2 f (\mu,x,y) = \sum_{i,j=1}^n\partial^2_{ij}g(\scalprod{\mu}{\boldsymbol\varphi})\varphi_i(x)\varphi_j(y),\\
		\diff_\mu f (\mu,x) &= \sum_{i=1}^n\partial_ig(\scalprod{\mu}{\boldsymbol\varphi})\diff_x\varphi_i(x),&\diff_\mu^2 f (\mu,x,y) = \sum_{i,j=1}^n\partial^2_{ij}g(\scalprod{\mu}{\boldsymbol\varphi})\diff_x\varphi_i(x)\diff_y^\top\varphi_j(y).
	\end{align*}
\end{example}
\subsection{Stochastic filtering framework}\label{subsec: filt}
Let $(\Omega,\mathcal{F},\mathbb{P})$ be a probability space endowed with a filtration $\{\mathcal{F}_t\}_{t\in[0,T]}$ that satisfies the usual conditions and let $I = \{I_t,t\in[0,T]\}$ be a $d$-dimensional $\{\mathcal{F}_t\}$-Brownian motion. Let  $b,h\colon\R^d\to\R^d,\sigma,\bar\sigma\colon\R^d\to\R^{d\times d}$ and let us define two differential operators $A\colon\mathrm{C^2}(K)\to\mathrm{C}(K)$ and $B\colon\mathrm{C^2}(K)\to\mathrm{C}(K;\R^d)$ by
\begin{align}\label{eqn: operators}
	A\varphi := b^\top\diff_x\varphi + \frac{1}{2}\tr\left\{\sigma\sigma^\top\diff^2_x\varphi\right\} + \frac{1}{2}\tr\left\{\bar\sigma\bar\sigma^\top\diff^2_x\varphi\right\},\quad B\varphi :=\bar\sigma^\top(\diff_x\varphi).
\end{align}
We consider the Kushner-Stratonovich equation in weak form: 
\begin{equation}\label{eqn: ks}
	\de\scalprod{\pi_t}{\varphi} = \scalprod{\pi_t}{A\varphi}\de t +\left( \scalprod{\pi_t}{h\varphi + B\varphi}^\top-\scalprod{\pi_t}{\varphi}\scalprod{\pi_t}{h}^\top\right)\de I_t,\quad\varphi\in\mathrm{C^2}(K).
\end{equation}
\begin{remark}
This equation can be related to a stochastic filtering problem with signal $X=\{X_t,t\in[0,T]\}$ and observation $Y = \{Y_t,t\in[0,T]\}$ such that:
\begin{equation}\label{eqn: filt}
\begin{aligned}
	\de X_t &= b(X_t)\de t +\sigma(X_t)\de B_t + \bar\sigma(X_t)\de W_t,\\
	\de Y_t &= h(X_t)\de t + W_t,\\
\end{aligned}
\end{equation}
where $W = \{W_t,t\in[0,T]\}$ and $B = \{B_t,t\in[0,T]\}$ are two independent and $d$-dimensional Brownian motions. If we take $I$ as the innovation process associate to \eqref{eqn: filt} (see for instance \cite[Chapter 3]{baincrisan} for more details), then the equation \eqref{eqn: ks} is the one solved by the optimal filter $\hat\pi_t = \mathcal{L}(X_t\vert\mathcal{F}^Y_t)$, where $\mathcal{F}^Y_t:=\sigma\left(Y_s,0\leq s\leq t\right)$. However, we will look at \eqref{eqn: ks} in the spirit of \cite{szpirglas}, without relying directly on the filtering problem.
\end{remark}

Following \cite{heunislucic,szpirglas}, we say that the pair $\{(\Omega,\mathcal{F}, \{\mathcal{F}_t\},\mathbb{P}),(\pi,I)\}$ is a weak solution to the equation \eqref{eqn: ks} if
	\begin{enumerate}[i.]
		\item $(\Omega,\mathcal{F}, \{\mathcal{F}_t\},\mathbb{P})$ is a complete filtered probability space;
		\item $I$ is a $d$-dimensional $\{\mathcal{F}_t\}$-Brownian motion;
		\item $\pi= \{\pi_t,t\in[0,T]\}$ is a probability measure-valued continuous $\{\mathcal{F_t}\}$-adapted process such that
			\begin{equation*}
				\P{\int_0^T\sum_{i=1}^d\pi_s(\lvert h_i\rvert)^2\de s<+\infty} = 1,
			\end{equation*}
			and equation \eqref{eqn: ks} holds for every $t\in[0,T]$, almost surely.
	\end{enumerate}
Now we give some conditions that guarantee the existence, uniqueness (in law) and Markov property of a weak solution to the filtering equation \eqref{eqn: ks}. A complete discussion can be found in \cite{szpirglas,heunislucic}.
\begin{hypothesis}\label{hp: general}The following conditions hold:
	\begin{enumerate}[a.]
		\item $b,\sigma,\bar\sigma,h$ are Borel-measurable and bounded;
		\item $b,\sigma,\bar\sigma$ are Lipschitz continuous;
		\item for every $x\in K$ the matrix $\sigma(x)\sigma(x)^\top$ is a positive definite.
	\end{enumerate}
\end{hypothesis}
Moreover, we will ask for the following invariance property: 
\begin{hypothesis} \label{hp: invariance}
The probability measures-valued process $\pi$ has trajectories confined in the subset $\prob(K)$, $K\subset\R^d$ compact.
\end{hypothesis}
For instance, this is the case when $\pi$ is the filter associated to a signal process $X=\{X_t,t\in[0,T]\}$ with trajectories confined in a compact subset $K\subset\R^d$. Some conditions on $b,\sigma,\bar\sigma$ that guarantee this property are stated in Remark \ref{rmk: invarianceconditions}.

\begin{remark}\label{rmk: invarianceconditions}
	Let $G$ be a subset of $\R^d$. The invariance of a stochastic process with respect to $G$ has been intensively studied and many criteria that guarantee this property are available in the literature. For instance, following \cite[Section 12.2]{friedman2}, let $G\subset\R^d$ be closed and with $\mathrm{C^3}$ connected boundary $\partial G$. Let $\nu$ be the outward normal to $\partial G$ and let us define $\rho(x):=\inf_{y\in G}\lvert x-y\rvert$ the distance from $x\in\R^d$ to $G$. If the conditions 
	\begin{align*}
		\sum_{i,j=1}^da_{ij}\nu_i\nu_j=0,\qquad
		\sum_{i=1}^db_i\nu_i + \frac{1}{2}\sum_{i,j=1}^da_{ij}\partial^2_{ij}\rho\geq 0\quad\text{on}\ \partial G,
	\end{align*}
	hold, where $a = \sigma\sigma^\top + \bar\sigma\bar\sigma^\top$, then $G$ is invariant for $X$. Other less restrictive conditions can be found in \cite{aubindaprato, dapratofrankowska04,dapratofrankowska07} and in the references therein. 
\end{remark}
\begin{remark}
	All the following results do not change when $K$ is a compact manifold as long as we consider the right differential operators. An example is when the signal process lives on the $d$-dimensional torus $\mathbb{T}^d$.
\end{remark}
\section{Viscosity solutions to the backward Kolmogorov equations}\label{sec: viscosity}
The aim of this section is to continue the study of the backward Kolmogorov equation associated to the Kushner-Stratonovich equation \eqref{eqn: ks}. This partial differential equation of parabolic type over $\prob(K)\times[0,T]$ has been introduced in \cite{martini}. Here we present a well posedness result for viscosity solutions, which is different from the one available in \cite{martini}, where existence and uniqueness of classical solutions have been proved under more restrictive requirements on the final condition. 

Before writing down the equation, we introduce the operator $\mathcal{L}\colon\mathrm{C^2_L}(\prob(K))\to\mathrm{C}(\prob(K))$, which is the infinitesimal generator associated to the Kushner-Stratonovich equation. In particular, it is defined by
\begin{multline*}
	(\mathcal{L}u)(\mu) = \scalprod{\mu}{A\lf u(\mu)} + \frac{1}{2}\scalprod{\mu\otimes\mu}{(h+B-\scalprod{\mu}{h})^\top(h+B-\scalprod{\mu}{h})\lf^2u(\mu)}\\
	= \scalprod{\mu}{b^\top\diff_\mu u (\mu)} + \frac{1}{2}\scalprod{\mu}{\tr\{\diff_x\diff_\mu u(\mu)\sigma\sigma^\top\}}+  \frac{1}{2}\scalprod{\mu}{\tr\{\diff_x\diff_\mu u(\mu)\bar\sigma\bar\sigma^\top\}}\\
	\quad+\frac{1}{2}\scalprod{\mu\otimes\mu}{\lf^2 u(\mu)h^\top h} + \frac{1}{2}\scalprod{\mu\otimes\mu}{\tr\{\diff^2_\mu u(\mu)\bar\sigma\bar\sigma^\top\}}+\frac{1}{2}\scalprod{\mu\otimes\mu}{\lf^2u(\mu)}\lvert\scalprod{\mu}{h}\rvert^2\\
	\quad+\scalprod{\mu\otimes\mu}{h\cdot\bar\sigma^\top\lf\diff_\mu u(\mu)}-\scalprod{\mu\otimes\mu}{\lf^2u(\pi)h}^\top\scalprod{\mu}{h}-\scalprod{\mu\otimes\mu}{\bar\sigma^\top\lf\diff_\mu u(\mu)}^\top\scalprod{\mu}{h},
\end{multline*}
where $A,B$ are the operators defined in \eqref{eqn: operators} and $b,\sigma,\bar\sigma,h$ are the mappings introduced in Section \ref{subsec: filt}. For the second order terms we used the notation 
\begin{equation*}
	\scalprod{\mu\otimes\mu}{(f_1^\top f_2) g} :=\int_{K\times K}f_1(x)^\top f_2(y)g(x,y)\mu(\de x)\mu(\de y), 
\end{equation*}
$f_1,f_2\colon K\to\R$ and $g\colon K\times K\to\R$.\\

We can now introduce the backward Kolmogorov equation associated to \eqref{eqn: ks} with final condition $\Phi\in\mathrm{C}(\prob(K))$:
\begin{equation}\label{eqn: kolmogorov}
	\begin{cases}
		-\partial_t u(\mu,t) - \mathcal{L} u (\mu,t) = 0,\quad &(\mu,t)\in\prob(K)\times[0,T),\\
		u(\mu,T) = \Phi(\mu),\quad &\mu\in\prob(K).
	\end{cases}
\end{equation}
A first notion of solution is the classical one, in which we ask the solution to be in $\mathrm{C^2_{L}}(\prob(K))$ with respect to the measure argument and in $\mathrm{C^1}([0,T])$ with respect to the time. We will denote this class of functions by $\mathrm{C^{2,1}_L}(\prob(K)\times[0,T])$.
\begin{definition}
	We say that $u\colon\prob(K)\times[0,T]\to\R$ is a classical solution to \eqref{eqn: kolmogorov} if it is in $\mathrm{C^{2,1}_L}(\prob(K)\times[0,T])$, with all the bounds on the derivatives that are uniform, and if it satisfies the backward equation \eqref{eqn: kolmogorov}.
\end{definition}
Assuming more regularity on the final condition $\Phi$, it has been proved in \cite{martini} that there exists a unique classical solution to \eqref{eqn: kolmogorov}.
\begin{theorem}\label{thm: exuniqclassical}
	Let $\{(\Omega,\mathcal{F}, \{\mathcal{F}_t\},\mathbb{P}),(\pi^{\mu,s},I)\}$ be a weak solution to \eqref{eqn: ks}, starting at time $s\in[0,T]$ from $\mu\in\prob(K)$ and that satisfies the invariance Hypothesis \ref{hp: invariance}. Moreover, let $\Phi\in\mathrm{C^2_L}(\prob(K))$ and let Hypothesis \ref{hp: general} holds. Then, there exists a unique classical solution to \eqref{eqn: kolmogorov} given by 
	\begin{equation}\label{eqn: reprformula}
		u(\mu,t) = \E{\Phi(\pi^{\mu,t}_T)},\quad (\mu, t)\in\prob(K)\times[0,T].
	\end{equation}
\end{theorem}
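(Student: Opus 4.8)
The plan is to obtain existence via the candidate \eqref{eqn: reprformula} and uniqueness by an Itô-type (Feynman--Kac) argument exploiting the Markov property of the filter. First I would set $u(\mu,t):=\E{\Phi(\pi^{\mu,t}_T)}$. Since $\prob(K)$ is compact in the weak topology, $\Phi\in\mathrm{C}(\prob(K))$ is bounded, so $u$ is well defined and bounded. Continuity of $u$ on $\prob(K)\times[0,T]$ would follow from stability estimates for \eqref{eqn: ks}: a Gronwall argument on $\E{\sfd_2(\pi^{\mu,t}_r,\pi^{\nu,s}_r)^2}$ --- which closes because under Hypothesis \ref{hp: general} the coefficients appearing in \eqref{eqn: ks} are bounded and Lipschitz and their only nonlinearity in the measure is the bilinear term $\scalprod{\mu}{\varphi}\scalprod{\mu}{h}$ --- together with uniqueness in law and the flow/Markov property established in \cite{szpirglas,heunislucic}.

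The technical heart is the regularity of $u$ in the measure variable. I would note first that, tested against $\varphi$, the drift and diffusion coefficients of \eqref{eqn: ks} depend on $\pi_t$ only affinely and quadratically, so the flow $\mu\mapsto\pi^{\mu,t}_T$ should be smooth in $\mu$ independently of the spatial smoothness of $b,\sigma,\bar\sigma,h$. Concretely, differentiating \eqref{eqn: ks} along the segment $\theta\mapsto\theta\mu+(1-\theta)\nu$ produces a \emph{linear} signed-measure-valued SDE for the first variation process, whose well-posedness and uniform $\sfd_2$-bounds again follow by Gronwall; composing with $\Phi\in\mathrm{C^2_L}(\prob(K))$ and using the chain rule for linear functional derivatives identifies $\lf u(\mu,t,\cdot)$, and iterating once more (second variation process) gives $\lf^2 u$. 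Differentiating in the frozen spatial variable then yields $\diff_\mu u$, $\diff_x\diff_\mu u$ and $\diff_\mu^2 u$ --- the step requiring the most care, since one must commute spatial differentiation with the variation equations --- and all bounds come out uniform in $(\mu,t)$ because the variation equations are linear with bounded coefficients on the compact $K$. Hence $u(\cdot,t)\in\mathrm{C^2_L}(\prob(K))$ uniformly.

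With the measure regularity available, I would establish (or invoke) an Itô formula on $\prob(K)$ for $\mathrm{C^2_L}$ functions applied to $r\mapsto\pi^{\mu,s}_r$ --- obtained by combining the chain rule for $\lf$ and $\diff_\mu$ with the classical Itô formula for the real semimartingales $r\mapsto\scalprod{\pi^{\mu,s}_r}{f_k}$ --- which, once $u$ is also $\mathrm{C^1}$ in time, gives $\de u(\pi^{\mu,s}_r,r)=\big(\partial_r u+\mathcal{L}u\big)(\pi^{\mu,s}_r,r)\,\de r$ plus a martingale term. By the Markov property, $u(\mu,s)=\E{u(\pi^{\mu,s}_r,r)}$ for $s\le r\le T$; differentiating this identity in $r$ at $r=s$ (legitimate by the uniform bounds and the continuity already shown) produces at once that $\partial_t u$ exists and is continuous and that $-\partial_t u-\mathcal{L}u=0$, while $u(\cdot,T)=\Phi$ is immediate. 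Thus $u\in\mathrm{C^{2,1}_L}(\prob(K)\times[0,T])$ is a classical solution.

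For uniqueness, let $v$ be any classical solution. Applying the same Itô formula on $\prob(K)$ to $r\mapsto v(\pi^{\mu,t}_r,r)$ on $[t,T]$, the finite-variation part is $\int_t^T\big(\partial_r v+\mathcal{L}v\big)(\pi^{\mu,t}_r,r)\,\de r=0$ and the stochastic integral is a genuine martingale because the derivatives of $v$ are uniformly bounded and $K$ is compact; taking expectations gives $v(\mu,t)=\E{v(\pi^{\mu,t}_T,T)}=\E{\Phi(\pi^{\mu,t}_T)}=u(\mu,t)$. The main obstacle throughout is the second-order measure regularity of $u$ together with the Itô formula on $\prob(K)$: one must control the first and second variation processes of a nonlinear measure-valued SDE, reconcile the resulting objects with the abstract derivatives $\lf,\lf^2,\diff_\mu,\diff_\mu^2$, and keep all bounds uniform on $\prob(K)\times[0,T]$ --- this is exactly the analysis carried out in \cite{martini}.
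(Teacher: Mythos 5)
This theorem is not actually proved in the present paper: it is quoted from \cite{martini} (the text around the statement and the remark immediately after it make this explicit), so there is no in-paper argument to match your proposal against; the question is whether your sketch would stand on its own. Its architecture is the standard one and partly sound: the uniqueness half is essentially correct, since for any classical solution $v$ with uniformly bounded derivatives the It\^o formula of Proposition \ref{prop: ito} (extended to time-dependent $\mathrm{C^{2,1}_L}$ functions) plus the martingale property of the stochastic integral gives $v(\mu,t)=\E{\Phi(\pi^{\mu,t}_T)}$, and the derivation of the PDE from the Markov identity $u(\mu,s)=\E{u(\pi^{\mu,s}_r,r)}$ is the usual semigroup argument.

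The genuine gap is the step you yourself call the technical heart: the $\mathrm{C^{2,1}_L}$ regularity of $u$. You claim the flow $\mu\mapsto\pi^{\mu,t}_T$ is smooth in $\mu$ ``independently of the spatial smoothness of $b,\sigma,\bar\sigma,h$'' by differentiating the nonlinear equation \eqref{eqn: ks} along segments; but a classical solution requires $A\lf u(\mu,\cdot)$ to be defined, i.e.\ $\lf u(\mu,\cdot)\in\mathrm{C^2}(K)$, together with $\diff_\mu u$, $\diff_x\diff_\mu u$, $\diff_\mu^2 u$, and with $h$ merely bounded measurable and $b,\sigma,\bar\sigma$ only Lipschitz this spatial regularity does not fall out of linear variation equations: it has to be extracted from the structure of the filtering problem (this is where Hypothesis \ref{hp: general} enters nontrivially in \cite{martini}). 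Moreover, differentiating the Kushner--Stratonovich dynamics directly produces variation equations whose coefficients contain the quadratic term $\scalprod{\pi}{\varphi}\scalprod{\pi}{h}$; their well-posedness, the identification of the resulting objects with $\lf u$, $\lf^2 u$, and the uniform bounds are precisely the hard analysis, and your sketch closes by deferring it to \cite{martini} --- which makes the proposal circular as a standalone proof. Note also that the cited work does not follow your route: as the remark after the theorem indicates, $\pi^{\mu,s}$ is constructed from a solution of the \emph{Zakai} equation, whose solution map is affine in the initial measure, and it is this linearity (via the normalization $\pi=\rho/\rho(1)$) that makes the differentiation in $\mu$ tractable; your proposed direct first/second variation of the nonlinear KS flow is a genuinely different and substantially harder path, and as written it is not carried out.
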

\begin{remark}
	In \cite{martini} the process $\pi^{\mu,s}$ is built starting from another masure-valued process, namely a weak solution to the Zakai equation (see for instance \cite{heunislucic, szpirglas} for more details). However, here we avoid to emphasize this to keep a simpler notation.
\end{remark}
We want to investigate the well-posedness of \eqref{eqn: kolmogorov} when the final condition has lower regularity, namely $\Phi\in\mathrm{C}(\prob(K))$. In this case classical solutions may not exist, and a natural way to face the problem is to consider a generalized notion of solution. In particular we will focus on the notion of viscosity solution:
\begin{definition} A  upper semicontinuous (resp. lower semicontinuous) function $u\colon\prob(K)\times [0,T]\to\R$ is a viscosity subsolution (resp. supersolution) to equation \eqref{eqn: kolmogorov} if:
\begin{enumerate}[i.]
	\item $u(\mu,T)\leq$ (resp. $\geq$) $\Phi(\mu)$, for every $\mu\in\prob(K)$;
	\item for every $(\mu,t)\in \prob(K)\times[0,T)$ and any $\varphi\in \mathrm{C^{2,1}_L}(\prob(K)\times[0,T])$ such that $u-\varphi$ has a maximum (resp. minimum) at $(\mu,t)$ with value 0, then \eqref{eqn: kolmogorov} holds for $\varphi$ with the inequality sign $\leq$ (resp. $\geq$) instead of the equality.
\end{enumerate}
We say that $u$ is a viscosity solution if it is both a viscosity subsolution and a viscosity supersolution.
\end{definition}
To conclude this preliminary part, we state an It\^o formula for the composition of a $\mathrm{C^2_L}(\prob(K))$ function with a weak solution $\pi$ of the equation \eqref{eqn: ks} (\cite{martini}). 
\begin{proposition}\label{prop: ito}
Let $\{(\Omega,\mathcal{F}, \{\mathcal{F}_t\},\mathbb{P}),(\pi^{\mu,s},I)\}$ be a weak solution to \eqref{eqn: kolmogorov} that satisfies the invariance Hypothesis \ref{hp: invariance} and let Hypothesis \ref{hp: general} hold. Moreover, let $u\in\mathrm{C^2_L}(\prob(K))$. Then, for evrey $t\in[s,T]$
\begin{multline*}
	u(\pi^{\mu,s}_t) = u(\mu) + \int_s^t\scalprod{\pi^{\mu,s}_r}{A\lf u(\pi^{\mu,s}_r)} \de r + \int_s^t \scalprod{\pi^{\mu,s}_r}{(h + B - \scalprod{\pi^{\mu,s}_r}{h})^\top\lf u(\pi^{\mu,s}_r)}\de I_r\\
	+\frac{1}{2}\int_s^t\scalprod{\pi^{\mu,s}_r\otimes\pi^{\mu,s}_r}{(h + B - \scalprod{\pi^{\mu,s}_r}{h})^\top(h + B - \scalprod{\pi^{\mu,s}_r}{h})\lf^2 u(\pi^{\mu,s}_r)}\de r,
\end{multline*}
almost surely.
\end{proposition}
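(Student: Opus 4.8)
The plan is to prove the identity first for \emph{cylinder functions} and then to recover the general case by approximation. Call $u\colon\prob(K)\to\R$ a cylinder function if $u(\mu)=g(\scalprod{\mu}{\varphi_1},\dots,\scalprod{\mu}{\varphi_n})$ for some $n\in\N$, $g\in\mathrm{C^2_b}(\R^n)$ and $\varphi_1,\dots,\varphi_n\in\mathrm{C^2}(K)$ (the extra regularity on the $\varphi_i$, harmless for density, is needed so that $A\varphi_i$ makes sense); by Example \ref{ex: cyl} such a $u$ belongs to $\mathrm{C^2_L}(\prob(K))$ with explicitly known derivatives.

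For such a $u$, the weak formulation \eqref{eqn: ks} says that for each $i$ the real process $t\mapsto\scalprod{\pi^{\mu,s}_t}{\varphi_i}$ is a continuous semimartingale with drift $\scalprod{\pi^{\mu,s}_t}{A\varphi_i}$ and martingale differential $M_i(t)^\top\de I_t$, where
\[
	M_i(t):=\scalprod{\pi^{\mu,s}_t}{h\varphi_i+B\varphi_i}-\scalprod{\pi^{\mu,s}_t}{\varphi_i}\scalprod{\pi^{\mu,s}_t}{h}\in\R^d.
\]
Since $\pi^{\mu,s}_t$ is supported in the compact $K$ and $h,\bar\sigma$ are bounded while $\varphi_i\in\mathrm{C^2}(K)$ (Hypothesis \ref{hp: general}), each $M_i$ is bounded; in particular the integrability requirement in the definition of weak solution holds automatically, the stochastic integrals below are genuine $L^2$ martingales, and $\de\langle\scalprod{\pi^{\mu,s}}{\varphi_i},\scalprod{\pi^{\mu,s}}{\varphi_j}\rangle_t=M_i(t)^\top M_j(t)\,\de t$. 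Applying the classical finite-dimensional It\^o formula to $g$ and the $\R^n$-valued semimartingale $t\mapsto(\scalprod{\pi^{\mu,s}_t}{\varphi_1},\dots,\scalprod{\pi^{\mu,s}_t}{\varphi_n})$ and then inserting the derivative formulas of Example \ref{ex: cyl} identifies the three terms: by linearity of $A$, the first-order drift is $\sum_i\partial_ig\,\scalprod{\pi^{\mu,s}}{A\varphi_i}=\scalprod{\pi^{\mu,s}}{A\lf u(\pi^{\mu,s})}$; since $M_i^\top=\scalprod{\pi^{\mu,s}}{(h+B-\scalprod{\pi^{\mu,s}}{h})^\top\varphi_i}$, the martingale part is $\sum_i\partial_ig\,M_i^\top\de I=\scalprod{\pi^{\mu,s}}{(h+B-\scalprod{\pi^{\mu,s}}{h})^\top\lf u(\pi^{\mu,s})}\de I$; and since $M_i(t)^\top M_j(t)=\scalprod{\pi^{\mu,s}_t\otimes\pi^{\mu,s}_t}{(h+B-\scalprod{\pi^{\mu,s}_t}{h})^\top(h+B-\scalprod{\pi^{\mu,s}_t}{h})\varphi_i(x)\varphi_j(y)}$ and $\lf^2u(\mu,x,y)=\sum_{i,j}\partial^2_{ij}g\,\varphi_i(x)\varphi_j(y)$, the second-order drift equals $\tfrac12\scalprod{\pi^{\mu,s}\otimes\pi^{\mu,s}}{(h+B-\scalprod{\pi^{\mu,s}}{h})^\top(h+B-\scalprod{\pi^{\mu,s}}{h})\lf^2u(\pi^{\mu,s})}$. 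This is exactly the asserted formula for cylinder $u$.

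For a general $u\in\mathrm{C^2_L}(\prob(K))$ I would invoke a density result (to be established in Appendix \ref{app: analytical}) producing cylinder functions $u_N$ with $u_N\to u$ uniformly on $\prob(K)$, $\lf u_N\to\lf u$, $\diff_x\lf u_N\to\diff_x\lf u$ and $\diff^2_x\lf u_N\to\diff^2_x\lf u$ uniformly on $\prob(K)\times K$, and $\lf^2u_N\to\lf^2u$ uniformly on $\prob(K)\times K\times K$. Because $b,\sigma,\bar\sigma,h$ are bounded and $\pi^{\mu,s}_r$ takes values in $\prob(K)$, these convergences give $A\lf u_N(\pi^{\mu,s}_r)\to A\lf u(\pi^{\mu,s}_r)$, $\lf u_N(\pi^{\mu,s}_r)\to\lf u(\pi^{\mu,s}_r)$ and $\lf^2u_N(\pi^{\mu,s}_r)\to\lf^2u(\pi^{\mu,s}_r)$ uniformly and boundedly. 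Writing the It\^o formula for each $u_N$ and passing to the limit: $u_N(\pi^{\mu,s}_t)\to u(\pi^{\mu,s}_t)$ pointwise, the two Lebesgue integrals converge by bounded convergence, and the stochastic integral converges in $L^2(\Omega)$ by the It\^o isometry since the integrands $\scalprod{\pi^{\mu,s}_r}{(h+B-\scalprod{\pi^{\mu,s}_r}{h})^\top\lf u_N(\pi^{\mu,s}_r)}$ are uniformly bounded; passing to an almost surely convergent subsequence of the stochastic integral yields the identity almost surely.

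The main obstacle is precisely this approximation lemma: approximating an element of $\mathrm{C^2_L}(\prob(K))$ by cylinder functions \emph{jointly} with the $x$-derivatives of its linear functional derivatives up to second order. I expect this to rest essentially on the compactness of $\prob(K)$ in the metric $\sfd_2$ of \eqref{eqn: distwang} and of $K$: one constructs $u_N$ by composing $u$ (and, after regularization, its derivatives) with a finite-dimensional smoothing of the moment map $\mu\mapsto(\scalprod{\mu}{f_1},\dots,\scalprod{\mu}{f_N})$ built from the dense smooth family $\{f_k\}$, choosing $N$ and the smoothing parameter so that all the required derivatives are controlled simultaneously via uniform continuity on the relevant compact sets. (Alternatively one could lift the problem to $\ell^2$ through the same embedding and invoke an infinite-dimensional It\^o formula, but the cylinder-function route keeps the argument self-contained and avoids the lifting.)
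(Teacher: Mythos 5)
Your first half is fine: for a cylinder function $u(\mu)=g(\scalprod{\mu}{\varphi_1},\dots,\scalprod{\mu}{\varphi_n})$ the weak formulation \eqref{eqn: ks} applied to each $\varphi_i$, the classical finite-dimensional It\^o formula for $g$, and the derivative formulas of Example \ref{ex: cyl} do give exactly the claimed identity, and the boundedness bookkeeping (compact $K$, Hypothesis \ref{hp: general}) is correct. The genuine gap is the second half. You reduce the general case $u\in\mathrm{C^2_L}(\prob(K))$ to cylinder functions through an approximation lemma requiring $u_N\to u$ \emph{together with} uniform convergence of $\lf u_N$, $\diff_x\lf u_N$, $\diff^2_x\lf u_N$ and $\lf^2 u_N$. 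Nothing in the paper provides this: the only density statement available is Proposition \ref{prop: sw}, which is a Stone--Weierstrass result and controls no derivatives at all. Your sketch does not resolve the actual difficulty, which is one of \emph{consistency}: it is not enough to approximate $u$, $\lf u$, $\lf^2 u$ separately; you need a single sequence of cylinder functions whose own functional derivatives (which necessarily lie in the span of finitely many fixed profiles $f_k$, coupled across $\mu$ through the same outer function) converge in the required topologies. Whether smooth functions on $\prob(K)$ can be approximated by cylinder functions in this $\mathrm{C^2_L}$ sense is a delicate question in its own right, and "composing $u$ with a smoothed moment map" does not obviously produce derivative convergence; as written, the argument would not close.

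Note also that the paper itself does not prove Proposition \ref{prop: ito}: it is quoted from the earlier work \cite{martini}, so there is no in-paper proof to match yours against. The standard way to prove such It\^o formulas (and the way that avoids your missing lemma entirely) is direct: expand $u(\pi_{t_{k+1}})-u(\pi_{t_k})$ along a partition of $[s,t]$ using the very definition of the linear functional derivatives (a second-order Taylor expansion in the measure variable), apply \eqref{eqn: ks} with the frozen test function $x\mapsto\lf u(\pi_{t_k},x)$, which is an admissible $\mathrm{C^2}(K)$ test function since $u\in\mathrm{C^2_L}(\prob(K))$, and control the remainder terms using the joint uniform continuity of $\lf u$, $\lf^2 u$ and their spatial derivatives on the compact sets $\prob(K)\times K$ and $\prob(K)\times K\times K$ (Hypothesis \ref{hp: invariance} is what keeps everything on a compact), before letting the mesh tend to zero. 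If you want to keep your cylinder-function route, you must actually state and prove the $\mathrm{C^2_L}$-approximation lemma, which is a substantial piece of work and arguably harder than the It\^o formula itself.
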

\begin{remark}
	The results in Theorem \ref{thm: exuniqclassical} and in Proposition \ref{prop: ito} hold more generally when $\pi$ takes values in $\ptwo(\R^d)$, the space of probability measures over $\R^d$ with finite second moment.
\end{remark}
\subsection{Existence of viscosity solutions}\label{ssec: exist}
Let us introduce a function $u$ by the usual probabilistic representation formula \eqref{eqn: reprformula}, where $\Phi\in\mathrm{C}(\prob(K))$ is the final condition of \eqref{eqn: kolmogorov} and $\pi^{\mu,s}$ is a weak solution to the Kushner-Stratonovich equation starting at $s\in[0,T]$ from $\mu\in\prob(K)$. 
\begin{proposition}\label{prop: existence}
	Let $\Phi\in\mathrm{C}(\prob(K))$, let Hypothesis \ref{hp: general} holds and let $\pi^{\mu,s}$ a solution to \eqref{eqn: ks} satisfying the invariance Hypothesis \eqref{hp: invariance}. Then the function 
	\begin{equation}\label{eqn: reprformula}
		u(\mu,t) := \E{\Phi(\pi^{\mu,t}_T)},\quad (\mu, t)\in\prob(K)\times[0,T],
	\end{equation}
	is a viscosity solution to the backward Kolmogorov equation \eqref{eqn: kolmogorov}.
\end{proposition}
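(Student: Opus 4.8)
The plan is to verify the two requirements in the definition of viscosity solution for the candidate $u(\mu,t)=\mathbb{E}[\Phi(\pi^{\mu,t}_T)]$. First I would establish the boundary condition and the basic regularity of $u$: since $\pi^{\mu,T}_T=\mu$ (the process started at the terminal time is constant), we immediately get $u(\mu,T)=\Phi(\mu)$, so condition (i) holds with equality. Continuity of $u$ on $\prob(K)\times[0,T]$ — needed so that $u$ is both upper and lower semicontinuous — should follow from the continuity of $\Phi$, the compactness of $\prob(K)$ (hence uniform continuity of $\Phi$ and uniform boundedness), and a stability/continuous-dependence estimate for the law of $\pi^{\mu,t}_T$ with respect to the initial data $(\mu,t)$; this stability is part of the well-posedness package for \eqref{eqn: ks} invoked in Theorem \ref{thm: exuniqclassical}. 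A dominated convergence argument then gives joint continuity of $u$.

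The heart of the argument is condition (ii), and here I would use the flow (Markov) property together with the Itô formula of Proposition \ref{prop: ito}. The key identity is the dynamic programming principle: for $s\le t\le r\le T$,
\begin{equation*}
u(\mu,s) = \mathbb{E}\big[u(\pi^{\mu,s}_r, r)\big],
\end{equation*}
which follows from the Markov property of $\pi$ and the tower property of conditional expectation. Now suppose $\varphi\in\mathrm{C^{2,1}_L}(\prob(K)\times[0,T])$ and $u-\varphi$ has a maximum at $(\mu_0,t_0)\in\prob(K)\times[0,T)$ with value $0$, so $u\le\varphi$ everywhere and $u(\mu_0,t_0)=\varphi(\mu_0,t_0)$. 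Applying the dynamic programming principle at $(\mu_0,t_0)$ and then $u\le\varphi$ on the right-hand side gives, for $r>t_0$ close to $t_0$,
\begin{equation*}
\varphi(\mu_0,t_0) = u(\mu_0,t_0) = \mathbb{E}\big[u(\pi^{\mu_0,t_0}_r,r)\big] \le \mathbb{E}\big[\varphi(\pi^{\mu_0,t_0}_r,r)\big].
\end{equation*}
Next I would apply the Itô formula of Proposition \ref{prop: ito} to $t\mapsto\varphi(\pi^{\mu_0,t_0}_t,t)$ (adding the time variable is a routine extension, since $\varphi$ is $\mathrm{C}^1$ in $t$), take expectations so that the $\de I_r$-martingale term drops out (its integrand is bounded because all derivatives of $\varphi$ are bounded and $h$ is bounded under Hypothesis \ref{hp: general}), divide by $r-t_0$, and let $r\downarrow t_0$. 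Using the continuity of all the integrands and dominated convergence, the limit yields
\begin{equation*}
0 \le \partial_t\varphi(\mu_0,t_0) + (\mathcal{L}\varphi)(\mu_0,t_0),
\end{equation*}
i.e. $-\partial_t\varphi(\mu_0,t_0)-(\mathcal{L}\varphi)(\mu_0,t_0)\le 0$, which is exactly the subsolution inequality. The supersolution inequality is obtained by the symmetric argument with a minimum and reversed inequalities.

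The main obstacle I anticipate is the careful justification of the passage to the limit and of the dynamic programming principle in the measure-valued setting. Two points require attention. First, the Markov property and continuous dependence of $\pi^{\mu,s}$ on the initial data are only available \emph{in law} (weak solutions are unique in law), so the dynamic programming identity must be phrased purely in terms of laws of the process and the fact that $u$ depends on $\mu$ only through $\mathbb{E}[\Phi(\pi^{\mu,\cdot}_T)]$; one should check that measurability of $\mu\mapsto u(\mu,r)$ suffices to make $\mathbb{E}[u(\pi^{\mu_0,t_0}_r,r)]$ well-defined and that the tower property applies. Second, interchanging $\lim_{r\downarrow t_0}$ with the expectation in the Itô expansion needs a uniform integrability bound: here compactness of $\prob(K)$ and boundedness of $A\varphi$, $B\varphi$, $h$ and the derivatives $\lf\varphi,\lf^2\varphi$ give a deterministic bound on the drift integrand, so dominated convergence applies cleanly. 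Once these two technical points are settled, the rest is a direct assembly of the dynamic programming principle, Proposition \ref{prop: ito}, and the definition of viscosity sub/supersolution.
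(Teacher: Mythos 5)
Your proposal is correct and follows essentially the same route as the paper: use the Markov/flow property of $\pi^{\mu,t}$ to get $\E{u(\pi^{\mu,t}_{t+h},t+h)}=u(\mu,t)=\varphi(\mu,t)$ at the touching point, apply the It\^o formula of Proposition \ref{prop: ito} to the test function (the paper freezes the time variable at $t+h$ and handles the time increment separately, while you include it directly — a cosmetic difference), divide by the time increment and pass to the limit using boundedness of the coefficients and of the derivatives of $\varphi$. The terminal condition and continuity of $u$ are handled in the paper exactly as you indicate, so no further comment is needed.
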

\begin{proof}
	We only prove that $u$ is a subsolution, since proving that it is a supersolution is analogue. 
	First, thanks to the continuity of $\Phi$ and $\pi^{\mu,s}$, we notice that $u$ is continuous and bounded.  Moreover $ u(\mu,T) = \E{\Phi(\pi^{\mu,T}_T)} = \Phi(\mu)$ for every $\mu\in\prob(K)$.
	
	Now, let us fix $(\mu,t)\in\prob(K)\times[0,T)$ and let us consider $\varphi\in \mathrm{C^{2,1}_L}(\prob(K)\times[0,T])$ such that $\varphi(\mu,t) = u(\mu,t)$ and $\varphi(\nu,s)\geq u(\nu,s)$ for every $(\nu,s)\in\prob(K)\times[0,T)$. Then, for $h>0$ it holds that
	\begin{equation*}
		0\leq \frac{\varphi(\pi^{\mu,t}_{t+h},t+h) - u(\pi^{\mu,t}_{t+h},t+h) }{h},
	\end{equation*}
	and, by taking the expectation combined with It\^o formula and Markov property of $\pi^{\mu,t}$, we obtain 
	\begin{equation*}
		0\leq \frac{1}{h}\E{\int_t^{t+h}\mathcal{L}\varphi(\pi_\tau^{\mu,t},t+h)\de\tau} + \frac{\varphi(\mu,t+h)-\varphi(\mu,t)}{h}.
	\end{equation*}
	Finally, thanks to the regularity of $\varphi$ and its derivatives, we can pass to the limit as $h\to 0$ and obtain
	\begin{equation*}
		0\leq \mathcal{L}\varphi(\mu,t) + \partial_t \varphi (\mu,t).
	\end{equation*}
\end{proof}
\begin{remark}
	This existence result is still valid even if we drop Hypothesis \ref{hp: invariance} and we look at the equation over $\ptwo(\R^d)$.
\end{remark}
\subsection{Comparison theorem and uniqueness}\label{ssec: uniq}
Here we state and prove a comparison principle for viscosity solutions to \eqref{eqn: kolmogorov}, which is a crucial tool to prove the uniqueness. The technique for the proof is inspired by the one used for Theorem II.1 in \cite{lions83_2}, and by its recent refinement used in \cite{cossogozzikharroubiphamrosestolato2}. In particular, we use the solution $u$ introduced by the representation formula \eqref{eqn: reprformula} as a reference to obtain partial comparison results. 
\begin{theorem}\label{thm: comparison}
Let $u_1,u_2\colon \prob(K)\times[0,T]\to\R$ be respectively a viscosity subsolution and supersolution of \eqref{eqn: kolmogorov}.
Moreover, let Hypothesis \ref{hp: general} and Hypothesis \ref{hp: invariance} hold. Then, $u_1(\mu,t)\leq u_2(\mu,t)$ for every $(\mu,t)\in \prob(K)\times[0,T]$.
\end{theorem}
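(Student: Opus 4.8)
\emph{Overview and strategy.} The plan is to deduce the comparison from two \emph{partial comparison} statements in which one of the two functions is the solution furnished by the representation formula. Set $\hat u(\mu,t):=\E{\Phi(\pi^{\mu,t}_T)}$; by Proposition~\ref{prop: existence}, $\hat u$ is a bounded continuous viscosity solution of~\eqref{eqn: kolmogorov}. I claim it suffices to prove $u_1\leq\hat u$ and $\hat u\leq u_2$ on $\prob(K)\times[0,T]$, which together yield $u_1\leq u_2$. The two statements are symmetric --- $u_1$ is a subsolution and $\hat u$ plays the role of a supersolution in the first, while $\hat u$ is a subsolution and $u_2$ a supersolution in the second --- so I only discuss $u_1\leq\hat u$.

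\emph{Reduction to a classical comparand.} Since $K$ is compact, $(\prob(K),\sfd_2)$ is a compact metric space, and the cylindrical functions of Example~\ref{ex: cyl} with smooth $g$ and $\varphi_i\in\mathrm{C^1}(K)$ --- all of which lie in $\mathrm{C^2_L}(\prob(K))$ --- form an algebra containing the constants and separating points; hence, by Stone--Weierstrass, $\mathrm{C^2_L}(\prob(K))$ is dense in $\mathrm{C}(\prob(K))$ (this is recorded in Appendix~\ref{app: analytical}). Pick $\Phi_n\in\mathrm{C^2_L}(\prob(K))$ with $\varepsilon_n:=\norm{\Phi-\Phi_n}_\infty\to0$. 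By Theorem~\ref{thm: exuniqclassical}, $u_n(\mu,t):=\E{\Phi_n(\pi^{\mu,t}_T)}$ is a classical solution of~\eqref{eqn: kolmogorov} with terminal datum $\Phi_n$, and $\norm{\hat u-u_n}_\infty\leq\varepsilon_n$. It is therefore enough to prove $u_1\leq u_n+\varepsilon_n$ on $\prob(K)\times[0,T]$ for each $n$, and then let $n\to\infty$.

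\emph{Partial comparison via the Borwein--Preiss principle.} Following~\cite{cossorusso}, assume $M:=\sup_{\prob(K)\times[0,T]}(u_1-u_n-\varepsilon_n)>0$. Fix $\lambda\in(0,M/T)$ and apply the Borwein--Preiss smooth variational principle (Appendix~\ref{app: analytical}, cf.~\cite{borweinzhu}) on the compact --- hence complete --- metric space $\prob(K)\times[0,T]$ to the lower semicontinuous, bounded below function $(\mu,t)\mapsto u_n(\mu,t)+\varepsilon_n-u_1(\mu,t)+\lambda(T-t)$: for an arbitrarily small accuracy this produces a point $(\bar\mu,\bar t)$ and a perturbation $\delta(\mu,t)=\sum_j\beta_j\bigl(\sfd_2(\mu,\mu_j)^2+(t-t_j)^2\bigr)$, with $\beta_j>0$, such that $u_1-u_n-\varepsilon_n-\lambda(T-\cdot)-\delta$ attains a strict global maximum at $(\bar\mu,\bar t)$, still with positive value, and such that $\delta$ together with all its $L$- and time-derivatives at $(\bar\mu,\bar t)$ is as small as we please. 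The crucial point is that $\sfd_2(\cdot,\nu)^2\in\mathrm{C^2_L}(\prob(K))$ with $L$-derivatives bounded uniformly in $\nu$ --- which is precisely why the weights $q_k=\max\{a_k,a_k^2,1\}$ enter the definition~\eqref{eqn: distwang} of $\sfd_2$ --- so that $\delta\in\mathrm{C^{2,1}_L}(\prob(K)\times[0,T])$ and, invoking also the boundedness of $b,\sigma,\bar\sigma,h$ (Hypothesis~\ref{hp: general}) and the explicit form of $\mathcal{L}$, the quantity $\lvert\partial_t\delta(\bar\mu,\bar t)\rvert+\lvert\mathcal{L}\delta(\bar\mu,\bar t)\rvert$ is controlled by the size of $\delta$. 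At $t=T$ the function $u_1-u_n-\varepsilon_n-\lambda(T-\cdot)-\delta$ is $\leq u_1(\cdot,T)-\Phi_n-\varepsilon_n\leq0$ (using $u_n(\cdot,T)=\Phi_n$, $\delta\geq0$ and $u_1(\cdot,T)\leq\Phi$), whereas its maximum value is positive, so $\bar t<T$. Then $\varphi:=u_n+\varepsilon_n+\lambda(T-\cdot)+\delta+c$, with $c$ the maximum value, lies in $\mathrm{C^{2,1}_L}(\prob(K)\times[0,T])$, and $u_1-\varphi$ attains a maximum equal to $0$ at $(\bar\mu,\bar t)$ with $\bar t\in[0,T)$. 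The viscosity subsolution inequality for $\varphi$ at $(\bar\mu,\bar t)$, combined with the classical equation $-\partial_t u_n-\mathcal{L}u_n=0$, with the fact that $\partial_t$ and $\mathcal{L}$ annihilate the constant $\varepsilon_n+c$, and with $-\partial_t[\lambda(T-\cdot)]=\lambda$, yields $\lambda\leq\lvert\partial_t\delta(\bar\mu,\bar t)\rvert+\lvert\mathcal{L}\delta(\bar\mu,\bar t)\rvert$. Taking the accuracy in the variational principle small enough (for the now fixed $\lambda$) makes the right-hand side smaller than $\lambda$: a contradiction. Hence $M\leq0$, i.e. $u_1\leq u_n+\varepsilon_n$; letting $n\to\infty$ gives $u_1\leq\hat u$. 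The same argument, with the roles of subsolution and supersolution and the signs reversed, gives $\hat u\leq u_2$, and combining the two inequalities proves the theorem.

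\emph{Main obstacle.} The delicate point is the partial comparison step and, inside it, the verification that the Borwein--Preiss perturbation $\delta$ is an \emph{admissible} test function for which $\mathcal{L}\delta$ is small: since $\mathcal{L}$ involves the second-order objects $\lf^2$, $\diff_x\diff_\mu$ and $\diff_\mu^2$, one needs $\sfd_2(\cdot,\nu)^2$ to be twice $L$-differentiable with derivatives bounded uniformly in $\nu$, which forces the careful choice of the family $\{f_k\}$ and of the normalization $q_k$ in~\eqref{eqn: distwang} underlying the appendix. A secondary difficulty is bookkeeping the several limits --- the linear-in-time parameter $\lambda$, the Borwein--Preiss accuracy, and the approximation index $n$ --- which must be sent to zero (resp.\ to infinity) in the right order.
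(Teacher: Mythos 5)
Your proposal is correct and follows essentially the same route as the paper: partial comparison of $u_1$ and $u_2$ against the representation-formula solution, Stone--Weierstrass approximation of $\Phi$ by $\mathrm{C^2_L}$ data yielding classical solutions $u_n$ via Theorem \ref{thm: exuniqclassical}, the Borwein--Preiss principle with the gauge built from $\sfd_2^2+(t-s)^2$, and Lemma \ref{lemma: distancediffer} to make the perturbation an admissible test function with uniformly small $\partial_t$ and $\mathcal{L}$ contributions. The only (harmless, arguably cleaner) deviations are that you use the linear penalization $\lambda(T-t)$ and the uniform bound $\lVert \hat u-u_n\rVert_\infty\leq\varepsilon_n$ in place of the paper's exponential scaling $\mathrm{e}^{\lambda t}$ and pointwise convergence at a fixed contradiction point, and you rule out $\bar t=T$ directly from the terminal condition rather than through the localization property of the variational principle.
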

\begin{proof}
We want to prove that $u_1\leq u \leq u_2$, where $u$ is given by \eqref{eqn: reprformula}. The proof is divided in five steps and we show only that $u_1 \leq u$. Indeed we can prove $u \leq u_2$ by noticing that $u_2$ is a subsolution of \eqref{eqn: kolmogorov} with final condition $-\Phi$ and by taking $-u$ instead of $u$.\\\\
By contradiction, let us assume that there exists $(\mu_0,t_0)\in \prob(K)\times[0,T]$ such that
\begin{equation}\label{eqn: hpcontr}
	u_1(\mu_0,t_0) - u(\mu_0,t_0)>0.
\end{equation}
First, we notice that $t_0<T$, indeed $u_1(\mu, T)\leq \Phi(\mu) = u(\mu,T)$ for every $\mu\in \prob(K)$, thanks to the subsolution property of $u_1$.\\\\
\textit{Step 1. }Let $\{\Phi_n\}_{n\in\N}\subset\mathrm{C^2_L}( \prob(K))$ be the sequence that converges uniformly to $\Phi\in\mathrm{C}( \prob(K))$ given by Proposition \ref{prop: sw}. For every $n\in\N$, thanks to Theorem \ref{thm: exuniqclassical} we can set
\begin{equation*}
	u_n(\mu,t) := \E{\Phi_n(\pi^{\mu,t}_T)},\quad (\mu,t)\in \prob(K)\times[0,T],
\end{equation*}
which is the unique classical solution to equation \eqref{eqn: kolmogorov} with final condition $\Phi_n$. Since $\{\Phi_n\}_{n\in\N}$ converges uniformly to $\Phi$, the sequence is bounded by a positive constant $M=M(\Phi)$. Then, dominated convergence allows us to conclude that
\begin{equation}\label{eqn: pointconv}
	\lim_{n\to+\infty}u_n(\mu,t) =  u(\mu,t),\quad (\mu, t)\in  \prob(K)\times[0,T].
\end{equation}
\noindent\textit{Step 2. }Let us introduce
\begin{equation}\label{eqn: defubar}
	\bar{u}_1 (\mu,t) := \mathrm{e}^{t-t_0}u_1(\mu,t),\quad \bar{u}_n  (\mu,t) := \mathrm{e}^{t-t_0}u_n(\mu,t),\quad (\mu,t)\in \prob(K)\times[0,T),
\end{equation}
and
\begin{equation*}
	\bar{\Phi} (\mu):=\mathrm{e}^{T-t_0}\Phi(\mu),\quad\bar{\Phi}_n  (\mu):=\mathrm{e}^{T-t_0}\Phi_n(\mu),\quad \mu\in \prob(K).
\end{equation*}
Then, it is easy to see that $\bar{u}_n $ is a classical solution of the equation
\begin{equation}\label{eq: bknl}
	\begin{cases}
		-\partial_t v(\mu,t) - \mathcal{L} v (\mu,t) = - v(\mu,t),\quad &(\mu,t)\in \prob(K)\times[0,T),\\
		v(\mu,T) = \bar{\Phi} _n(\mu),\quad &\mu\in \prob(K),
	\end{cases}
\end{equation}
and, analogously, that $\bar{u}_1 $ is a viscosity subsolution of  
\begin{equation*}
	\begin{cases}
		-\partial_t v(\mu,t) - \mathcal{L} v (\mu,t) = - v(\mu,t),\quad &(\mu,t)\in \prob(K)\times[0,T),\\
		v(\mu,T) = \bar{\Phi} (\mu),\quad &\mu\in \prob(K).
	\end{cases}
\end{equation*}
\textit{Step 3. } For every $\lambda > 0$, let us define the mapping $\Psi_\lambda\colon\prob(K)\times[0,T]\to\R$ as
\begin{equation}\label{eqn: defPsi}
	\Psi_\lambda(\mu,t):=\bar{u_1}(\mu,t) - \bar{u}_n(\mu,t) - \lambda(t-t_0)^2 - \lambda \sfd_2(\mu_0,\mu)^2.
\end{equation}
Since $\bar{u}_1$ is upper semicontinuous over a compact space, and $\bar{u}_n $ is continuous and bounded uniformly in $n$, we have that $(\mu_\lambda,t_\lambda)\in\prob(K)\times[0,T]$ is a global maximum for $\Psi_\lambda$. Moreover, from \eqref{eqn: defPsi} it follows that
\begin{equation}\label{eqn: estpsi}
	\bar{u}_1(\mu_0,t_0) - \bar{u}_n(\mu_0,t_0) = \Psi_\lambda(\mu_0,t_0)\leq\Psi(\mu_\lambda,t_\lambda)\leq\bar{u}_1(\mu_\lambda,t_\lambda) - \bar{u}_n(\mu_\lambda,t_\lambda),
\end{equation}
and so 
\begin{equation}\label{eqn: estdistance}
	\lambda(t_\lambda-t_0)^2 + \lambda \sfd_2(\mu_0,\mu_\lambda)^2\leq \left(\bar{u}_1(\mu_\lambda,t_\lambda) - \bar{u}_n(\mu_\lambda,t_\lambda)\right) - \left(\bar{u}_1(\mu_0,t_0) - \bar{u}_n(\mu_0,t_0)\right).
\end{equation}
Thanks to \eqref{eqn: hpcontr} and \eqref{eqn: pointconv}, for $n$ large enough we have that $\left(\bar{u}_1(\mu_0,t_0) - \bar{u}_n(\mu_0,t_0)\right)>0$. Combining this with the fact that $\bar{u}_1$ is bounded from above and $\bar{u}_n $ is bounded uniformly in $n$, from \eqref{eqn: estdistance} we obtain that there exists a constant $M>0$ such that
\begin{equation}\label{eqn: estmdelta}
	\lvert t_\lambda-t_0\rvert\leq\sqrt{\frac{M}{\lambda}},\quad \sfd_2(\mu_0,\mu_\lambda)\leq\sqrt{\frac{M}{\lambda}}.
\end{equation}
%
\textit{Step 4. }Now we show that $ t_\lambda<T$. By contradiction, let $t_\lambda = T$. From \eqref{eqn: estpsi}, 
\begin{equation*}
	u_1(\mu_0,t_0) - u_n(\mu_0,t_0) = \bar{u}_1(\mu_0,t_0) - \bar{u}_n(\mu_0,t_0) \leq \bar{u}_1(\mu_\lambda,T) - \bar{u}_n(\mu_\lambda,T),
\end{equation*}
and then
\begin{equation}
	u_1(\mu_0,t_0) - u_n(\mu_0,t_0)\leq \mathrm{e}^{T - t_0}\left(\Phi(\mu_\lambda ) - \Phi_n(\mu_\lambda)\right).
\end{equation} 
Letting $n\to\infty$, since $\Phi_n\to\Phi$ uniformly (see Proposition \ref{prop: sw}) and $u(\mu_0,t_0)\to u_n(\mu_0,t_0)$, we obtain
\begin{equation*}
	u_1(\mu_0,t_0) - u(\mu_0,t_0)\leq0,
\end{equation*}
which contradicts \eqref{eqn: hpcontr}.\\\\
\textit{Step 5. }Let us define $\prob(K)\times[0,T]\ni(\mu,t)\mapsto\varphi_\lambda(\mu,t):=\lambda(t-t_0)^2 + \lambda \sfd_2(\mu_0,\mu)^2$ and let us notice that $\varphi_\lambda\in\mathrm{C^{2,1}_L}(\prob(K)\times[0,T])$ thanks to Lemma \ref{lemma: distancediffer}. We want to use $\bar{u}_n + \varphi_\lambda$ as test function for the viscosity subsolution property of $\bar{u}_1$. First, $\bar{u}_n + \varphi_\lambda$ is in $\mathrm{C^{2,1}_L}(\prob(K)\times[0,T])$ since $\bar{u}_n,\varphi_\lambda\in\mathrm{C^{2,1}_L}(\prob(K)\times[0,T])$. Moreover, from \textit{Step 3} we have that $\bar{u}_1 - (\bar{u}_n + \varphi_\lambda)$ has a maximum in $(\mu_\lambda,t_\lambda)$ and from \textit{Step 4} it holds that $t_\lambda<T$. 

Thus, we can actually use $\bar{u}_n +\varphi_\lambda$ as a test function in $(\mu_\lambda,t_\lambda)$ and, by the viscosity subsolution property of $\bar{u}_1 $, obtain
\begin{equation*}
	\bar{u}_1(\mu_\lambda,t_\lambda)\leq\partial_t(\bar{u}_n+\varphi_\lambda)(\mu_\lambda,t_\lambda) + \mathcal{L}(\bar{u}_n+\varphi_\lambda)(\mu_\lambda,t_\lambda).
\end{equation*}
Since $\bar{u}_n$ is a classical solution of \eqref{eq: bknl}, this entails
\begin{equation*}
	\bar{u}_1(\mu_\lambda,t_\lambda) - \bar{u}_n(\mu_\lambda,t_\lambda)\leq \partial_t\varphi_\lambda(\mu_\lambda,t_\lambda) + \mathcal{L}\varphi_\lambda(\mu_\lambda,t_\lambda).
\end{equation*}
Now we can use the estimates \eqref{eqn: estmdelta} and the bound in Remark \ref{rmk: lbounded} to obtain, for a positive constant $C>0$,
\begin{equation*}
	\bar{u}_1(\mu_\lambda,t_\lambda) - \bar{u}_n(\mu_\lambda,t_\lambda)\leq 2\lambda(t_\lambda-t_0) + \lambda(\mathcal{L}\rho_{\mu_0})(\mu_\lambda)\leq 2\sqrt{\lambda M} + \lambda C,
\end{equation*}
where $\rho_{\mu_0}$ is defined as in \eqref{eqn: defrho}.
Finally, by \eqref{eqn: estpsi} and \eqref{eqn: defubar}, it follows that
\begin{equation}
	u_1(\mu_0,t_0) - u_n(\mu_0,t_0) = \bar{u}_1(\mu_0,t_0) - \bar{u}_n(\mu_0,t_0)\leq 2\sqrt{\lambda M} + \lambda C(\mu_0),
\end{equation}
which contradicts \eqref{eqn: hpcontr} if we let $\lambda\to0$ and $n\to\infty$. 
\end{proof}
\begin{corollary}\label{cor: uniq}
	Let $\Phi\in\mathrm{C}(\prob(K))$ and let Hypothesis \ref{hp: general} and Hypothesis \ref{hp: invariance} hold. Then
	\begin{equation*}
		u(\mu,t) = \E{\Phi(\pi^{\mu,t}_T)},\quad (\mu, t)\in\prob(K)\times[0,T],
	\end{equation*}	
	is the unique viscosity solution to the backward Kolmogorov equation \eqref{eqn: kolmogorov}.
\end{corollary}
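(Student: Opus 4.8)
The plan is to obtain the corollary as a quick consequence of the existence result (Proposition \ref{prop: existence}) and the comparison theorem (Theorem \ref{thm: comparison}), which carries all the analytic weight. First I would record that the function $u$ defined by the representation formula \eqref{eqn: reprformula} is indeed a viscosity solution to \eqref{eqn: kolmogorov}: this is exactly Proposition \ref{prop: existence}, and it already settles existence. It then remains to prove uniqueness.

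For uniqueness, let $v\colon\prob(K)\times[0,T]\to\R$ be an arbitrary viscosity solution to \eqref{eqn: kolmogorov}. Being simultaneously a viscosity subsolution (hence upper semicontinuous) and a viscosity supersolution (hence lower semicontinuous), $v$ is continuous on $\prob(K)\times[0,T]$; since $\prob(K)$ is compact in the weak topology and $[0,T]$ is compact, the product $\prob(K)\times[0,T]$ is compact, so $v$ is bounded and in particular $\sup v<+\infty$ and $\inf v>-\infty$. Likewise $u$ is bounded, because $\Phi\in\mathrm{C}(\prob(K))$ is bounded on the compact set $\prob(K)$ and $u(\mu,t)=\E{\Phi(\pi^{\mu,t}_T)}$ inherits the same bound. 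Hence both $u$ and $v$ satisfy the finiteness hypotheses required in Theorem \ref{thm: comparison}.

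Next I would apply the comparison theorem twice, under Hypotheses \ref{hp: general} and \ref{hp: invariance}. Taking $u_1=v$ (a subsolution) and $u_2=u$ (a supersolution, since $u$ is a solution), Theorem \ref{thm: comparison} gives $v\leq u$ on $\prob(K)\times[0,T]$. Taking instead $u_1=u$ (a subsolution) and $u_2=v$ (a supersolution) gives $u\leq v$. Combining the two inequalities yields $u=v$, so $u$ is the unique viscosity solution, which is the claim.

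I do not anticipate a genuine obstacle here: the hard work — the Borwein–Preiss variational principle, the smooth approximation of the final datum, and the use of the probabilistic candidate $u$ to run partial comparison arguments — is already absorbed in the proof of Theorem \ref{thm: comparison}. The only mild point of care is the verification that an a priori merely semicontinuous competitor $v$ satisfies $\sup v<+\infty$ and $\inf v>-\infty$; this is dispatched by the elementary observation above that a viscosity solution is continuous and the domain is compact.
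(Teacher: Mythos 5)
Your proposal is correct and follows essentially the same route as the paper: existence via Proposition \ref{prop: existence} and uniqueness by applying Theorem \ref{thm: comparison} twice with the roles of sub- and supersolution exchanged. Your extra verification that an arbitrary viscosity solution is bounded (semicontinuity in both directions plus compactness of $\prob(K)\times[0,T]$), so that the finiteness hypotheses of the comparison theorem hold, is a minor but welcome addition that the paper leaves implicit.
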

\begin{proof}
	Let $u$ be the solution to \eqref{eqn: kolmogorov} defined by \eqref{eqn: reprformula} and let $v$ be another viscosity solution. Since in particular $u$ is a subsolution and $v$ is a supersolution, Theorem \ref{thm: comparison} tells us that $u\leq v$. Changing the role of $u$ and $v$ we obtain that $v\leq u$ and so the two solutions coincide.
\end{proof}

\appendix
\section{Some analytical results}\label{app: analytical}
Here we collect some analytical tools necessary in the proof of the comparison principle (Theorem \ref{thm: comparison}). In particular we need to study the differentiability of the distance $\sfd_2$ introduced in \eqref{eqn: distwang}. Moreover, we also need to introduce a way to approximate continuous functions over $\prob(K))$ with functions in $\mathrm{C^2_{L}}(\prob(K))$.\\\\
For a family $\{f_k\}_{k\in\N}$ dense in $\mathrm{C^\infty_c}(K)$, with $\norm{f_k}_\infty\leq 1$ and containing the constant function equal to one, and for a sequence $\{q_k\}_{k\in\N}\subset[1,+\infty)$, we recall that
\begin{equation*}
	\sfd^2_2(\mu,\nu) = \sum_{k=1}^{+\infty}\frac{1}{2^kq_k}\scalprod{\mu-\nu}{f_k}^2,\quad \mu,\nu\in\prob(K). 
\end{equation*} For what follows, we need to choose
\begin{equation*}
	q_k:=\max\left\{1,\norm{\diff_x f_k}_\infty,\norm{\diff^2_x f_k}_{\infty},\norm{\diff_x f_k}_\infty^2\right\},\quad k\in\N.
\end{equation*}
Let $\mu_0\in\prob(K)$ be fixed and let $\rho_{\mu_0}\colon\prob(K)\to[0,+\infty)$ be defined as
\begin{equation}\label{eqn: defrho}
	\rho_{\mu_0}(\mu):=\sfd_2(\mu_0,\mu)^2,\quad\mu\in\prob(K).
\end{equation}

\begin{lemma}\label{lemma: distancediffer}
Let $\mu_0\in\prob(K)$ be fixed. Then, the mapping $\rho_{\mu_0}$ is in $\mathrm{C^2_{L}}(\prob(K))$ with derivatives uniformly bounded with respect to $\mu_0$.
\end{lemma}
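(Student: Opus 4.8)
The plan is to compute the linear functional derivative of the squared distance directly from the series representation \eqref{eqn: distwang} and check the regularity termwise. Write $F(\mu) := \sfd_2(\mu,\mu_0)^2 = \sum_{k=1}^{+\infty} \frac{1}{2^k q_k}\scalprod{\mu-\mu_0}{f_k}^2$. Each summand is a cylinder function of the type considered in Example \ref{ex: cyl}, with $g(z) = z^2$ and the single test function $f_k$ (shifted by the constant $\scalprod{\mu_0}{f_k}$); by that example it lies in $\mathrm{C^2_L}(\prob(K))$ with
\begin{equation*}
	\lf\!\left[\scalprod{\cdot-\mu_0}{f_k}^2\right]\!(\mu,x) = 2\scalprod{\mu-\mu_0}{f_k}\, f_k(x),\qquad
	\lf^2\!\left[\scalprod{\cdot-\mu_0}{f_k}^2\right]\!(\mu,x,y) = 2 f_k(x) f_k(y),
\end{equation*}
and the corresponding $L$-derivatives $2\scalprod{\mu-\mu_0}{f_k}\diff_x f_k(x)$ and $2\diff_x f_k(x)\diff_y^\top f_k(y)$.

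Next I would show the series obtained by differentiating term by term converges uniformly, so that $F$ inherits the linear functional derivative
\begin{equation*}
	\lf F(\mu,x) = \sum_{k=1}^{+\infty}\frac{1}{2^{k-1} q_k}\scalprod{\mu-\mu_0}{f_k}\, f_k(x),
\end{equation*}
and analogously for $\lf^2 F$, $\diff_\mu F$, $\diff_x\diff_\mu F$ and $\diff_\mu^2 F$. The key point is the normalization: $\norm{f_k}_\infty \le 1$ and $a_k = \max_{x\in K}\lvert\diff_x f_k(x)\rvert$, together with $q_k = \max\{a_k,a_k^2,1\}$, are chosen precisely so that $\frac{\norm{f_k}_\infty}{q_k}$, $\frac{a_k}{q_k}$ and $\frac{a_k^2}{q_k}$ are all bounded by $1$; since $K$ is compact, $\lvert\scalprod{\mu-\mu_0}{f_k}\rvert \le 2\norm{f_k}_\infty \le 2$. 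Hence every differentiated series is dominated by a convergent geometric series $\sum_k C\,2^{-k}$ with $C$ independent of $\mu$ (and of $\mu_0$), which gives both uniform convergence — so the termwise differentiation is legitimate and the limits are jointly continuous as uniform limits of jointly continuous functions — and the claimed uniform bound on the derivatives with respect to $\mu_0$. One should also verify the defining integral identity of Definition \ref{def: lfder} for $F$ passes to the limit, which is immediate from uniform convergence of $\lf F$ along segments $\theta\mu+(1-\theta)\nu$.

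The main obstacle is not conceptual but bookkeeping: one must be careful that the chosen version of $\lf F$ is jointly continuous in $(\mu,x)$ and that differentiating in $x$ the series $\sum_k \frac{1}{2^{k-1}q_k}\scalprod{\mu-\mu_0}{f_k} f_k(x)$ is justified — this again follows because the differentiated series $\sum_k \frac{1}{2^{k-1}q_k}\scalprod{\mu-\mu_0}{f_k}\diff_x f_k(x)$ is uniformly dominated by $\sum_k \frac{4 a_k}{2^{k-1} q_k} \le \sum_k 8\cdot 2^{-k}$, using $a_k/q_k \le 1$; the second-order derivatives in $x,y$ use $a_k^2/q_k\le 1$ the same way. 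Tracking which of the three ratios appears at each differentiation level, and confirming the constants are genuinely $\mu_0$-independent, is the only delicate part.
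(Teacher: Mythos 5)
Your proposal is correct and follows essentially the same route as the paper: termwise computation of $\lf$, $\lf^2$, $\diff_\mu$, $\diff_\mu^2$ for each summand $\scalprod{\mu-\mu_0}{f_k}^2$ (with the same bounds $4$, $2$, $4a_k$, $2a_k^2$), followed by exchanging derivative and series thanks to the weights $2^{-k}q_k^{-1}$, which is exactly the paper's dominated-convergence step. Your explicit appeal to Example \ref{ex: cyl} and the remark on passing the defining identity of Definition \ref{def: lfder} to the limit are just slightly more detailed versions of what the paper leaves implicit.
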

\begin{proof}
	First, let us fix $k\in\N$ and show that the mapping $\mu\mapsto\scalprod{\mu - \mu_0}{f_k}^2=:g_k(\mu)$, where $f_k$ is in the family appearing in  \eqref{eqn: distwang}, is in $\mathrm{C^2_{L}}(\prob(K))$. By standard computations we get
	\begin{equation}
		\begin{aligned}
		\lf g_k (\mu,x) &= 2\scalprod{\mu - \mu_0}{f_k}f_k(x),&\quad &\mu\in\prob(K),x\in K,\\
		\lf^2 g_k (\mu,x,y) &= 2f_k(x)f_k(y),&\quad &\mu\in\prob(K),x,y\in K,
		\end{aligned}
	\end{equation}
	with $\norm{\lf g_k}_\infty \leq 4$ and $\norm{\lf^2g_k}_\infty\leq 2$ for every $k\in\N$. Moreover, we have that
	\begin{equation*}
		\begin{aligned}
		\diff_\mu g_k (\mu,x) &= 2\scalprod{\mu - \mu_0}{f_k}\diff_x f_k(x),&\quad &\mu\in\ptwo(K),x\in K,\\
		\diff_x\diff_\mu g_k (\mu,x) &= 2\scalprod{\mu - \mu_0}{f_k} \diff^2_xf_k(x),&\quad &\mu\in\ptwo(K),x\in K,\\
		\diff_\mu^2 g_k (\mu,x,y) &= 2\diff_x f_k(x)\diff_y^\top f_k(y),&\quad &\mu\in\ptwo(K),x,y\in K,\\
		\lf\diff_\mu g_k (\mu,x,y) &= 2\diff_x f_k(x) f_k(y),&\quad &\mu\in\ptwo(K),x,y\in K,
		\end{aligned}
	\end{equation*}
	with
	\begin{equation*} 
	\begin{aligned}
		\norm{\diff_\mu g_k}_\infty &\leq 4\norm{\diff_x f_k}_\infty,&\quad \norm{\diff_x\diff_\mu g_k }_\infty \leq& 4\norm{\diff^2_x f_k}_\infty,\\
		\norm{\diff_\mu^2g}_\infty&\leq 2\norm{\diff_x f_k}_\infty^2,&\quad\norm{\lf\diff_\mu g_k}_\infty\leq &2\norm{\diff_x f_k}_\infty. 
	\end{aligned}
	\end{equation*}
	To conclude, by dominated convergence we can bring the derivative inside the series and get
	\begin{equation*}
		\lf \rho_{\mu_0}(\mu,x) = \sum_{k=1}^{+\infty}\frac{1}{2^kq_k}\lf g_k(\mu,x),\quad \mu\in\prob(K),x\in K,
	\end{equation*}
	which is also jointly continuous. Moreover, recalling the definition of $\{q_k\}_{k\in\N}$, we have
	\begin{equation*}
		\norm{\lf \rho_{\mu_0}}_\infty\leq\sum_{k=1}^{+\infty}\frac{1}{2^kq_k}\norm{\lf g_k}_\infty\leq4.
	\end{equation*} We can proceed in the same way for the other derivatives and the bounds follow easily.
\end{proof}
\begin{remark}\label{rmk: lbounded}
	Let $\mathcal{L}$ be the differential operator introduced in Section \ref{sec: viscosity}. Under Hypothesis \ref{hp: general}, we have that from Lemma \ref{lemma: distancediffer} follows that there exists a constant (independent of $\mu_0$) $C >0$ such that  $\norm{\mathcal{L}\rho_{\mu_0}}_\infty= \sup_{\mu\in\prob(K)}\lvert(\mathcal{L}\rho_{\mu_0})(\mu)\rvert\leq C$. Indeed
	\begin{multline}
		\norm{\mathcal{L}\rho_{\mu_0}}_\infty \leq \norm{A\lf u}_\infty + \frac{1}{2}\norm{(h+B-\scalprod{\mu}{h})^\top(h+B-\scalprod{\mu}{h})\lf^2u}_\infty\\
		\leq 4\norm{b}_\infty +  2\norm{\sigma}^2_\infty + 3\norm{\bar{\sigma}}^2_\infty + 4\norm{h}^2_\infty + 4\norm{h}_\infty\norm{\bar{\sigma}}_\infty =:C.
	\end{multline}
\end{remark}
\quad\\
To conclude this auxiliary section, we provide a class of polynomials on the space of probability measures that can be used to approximate functions in $\mathrm{C}(\prob(K))$, which is the set of continuous real-valued functions over $\prob(K)$. This kind of approximation has been discussed for instance in \cite{cuchierolarssonsvalutoferro}.

\begin{definition}
	We define the set of polynomials over $\prob(K)$:
	\begin{equation}
		\mathscr{P}(\prob(K)):=\{\phi\colon\ptwo\to\R \text{ of the form } \phi(\mu) = f(\scalprod{\mu}{\psi_1},\dots,\scalprod{\mu}{\psi_n}),\text{ for some }n\in\N\},
	\end{equation}
	with $f\colon\R^n\to\R$ and $\psi_i\colon\R^d\to\R$ polynomials, $i=1,\dots,n$.
\end{definition}
\begin{remark} It holds that $\mathscr{P}(\prob(K))\subset\mathrm{C^2_{L}}(\prob(K))$. Indeed, every $\phi\in\mathscr{P}(\prob(K))$ is of the form discussed in Example \ref{ex: cyl}.
\end{remark}
One can prove (see for instance \cite[Section 2.3]{cuchierolarssonsvalutoferro}) that the family $\mathscr{P}(\prob(K))$ is an algebra in $\prob(K)$ that separates the points and which contains the constants. Thus, by Stone-Weiestrass theorem one can conclude that it is dense in $\mathrm{C}(\prob(K))$:
\begin{proposition}\label{prop: sw}
	The family $\mathscr{P}(\prob(K))$ is dense in $\mathrm{C_b}(\prob(K))$ with respect to the supremum norm over $\prob(K)$.
\end{proposition}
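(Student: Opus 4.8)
The plan is to verify the hypotheses of the Stone--Weierstrass theorem for the compact space $\prob(K)$ and the subalgebra $\mathscr{P}(\prob(K))\subset\mathrm{C}(\prob(K))$, and then invoke it. The starting observation is that, $K$ being compact, $\prob(K)$ equipped with the weak topology is a compact metric space (it is metrized by $\sfd_2$, which is complete and induces the weak convergence), so that $\mathrm{C_b}(\prob(K))=\mathrm{C}(\prob(K))$ and the compact version of Stone--Weierstrass is available.

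\textbf{Step 1: $\mathscr{P}(\prob(K))\subset\mathrm{C_b}(\prob(K))$.} For a polynomial $\psi\colon\R^d\to\R$, its restriction to $K$ is bounded and continuous, hence $\mu\mapsto\scalprod{\mu}{\psi}$ is weakly continuous on $\prob(K)$; composing with a polynomial (in particular continuous) $f\colon\R^n\to\R$ shows that every $\phi\in\mathscr{P}(\prob(K))$ is continuous on $\prob(K)$, and it is automatically bounded since $\prob(K)$ is compact.

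\textbf{Step 2: $\mathscr{P}(\prob(K))$ is a subalgebra containing the constants.} Given $\phi(\mu)=f(\scalprod{\mu}{\psi_1},\dots,\scalprod{\mu}{\psi_n})$ and $\phi'(\mu)=f'(\scalprod{\mu}{\psi'_1},\dots,\scalprod{\mu}{\psi'_m})$, concatenating the finite families $\{\psi_i\}$ and $\{\psi'_j\}$ and taking the sum (resp. product) of $f$ and $f'$ viewed as polynomials in the enlarged set of $n+m$ variables realizes $\phi+\phi'$ (resp. $\phi\phi'$) as an element of $\mathscr{P}(\prob(K))$; constants are obtained by choosing $f$ constant (or $\psi_1\equiv1$ and $f$ linear). \textbf{Step 3: $\mathscr{P}(\prob(K))$ separates points.} If $\mu\neq\nu$ in $\prob(K)$, there is $g\in\mathrm{C}(K)$ with $\scalprod{\mu}{g}\neq\scalprod{\nu}{g}$; by the classical Weierstrass theorem on the compact set $K$ one can choose a polynomial $\psi$ with $\norm{\psi-g}_\infty$ small enough that $\scalprod{\mu}{\psi}\neq\scalprod{\nu}{\psi}$ still holds, and then the map $\rho\mapsto\scalprod{\rho}{\psi}$ belongs to $\mathscr{P}(\prob(K))$ and distinguishes $\mu$ from $\nu$.

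With these three points established, the Stone--Weierstrass theorem yields that $\mathscr{P}(\prob(K))$ is dense in $\mathrm{C}(\prob(K))=\mathrm{C_b}(\prob(K))$ for the supremum norm, which is the claim. I do not expect a genuine obstacle here: the only two points that deserve care are the compactness of $\prob(K)$ in the weak topology (which both makes the compact form of Stone--Weierstrass applicable and renders boundedness in Step~1 automatic) and, in Step~3, the reduction from separation by arbitrary continuous functions on $K$ to separation by polynomials, which is exactly where the classical Weierstrass approximation theorem on the compact set $K$ enters.
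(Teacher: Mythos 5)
Your proof is correct and follows essentially the same route as the paper: the paper also applies the compact Stone--Weierstrass theorem after noting that $\mathscr{P}(\prob(K))$ is an algebra containing the constants and separating points (it cites references for these facts, whereas you verify them directly, e.g.\ using the classical Weierstrass theorem on $K$ for point separation). No gaps.
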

\subsection*{Acknowledgements} The author is grateful to Prof. Andrea Cosso for the helpful discussions and suggestions.
\printbibliography
\end{document}